\renewcommand{\le}{\leqslant}
\renewcommand{\ge}{\geqslant}
\newcommand{\ptl}{\partial}
\newcommand{\rr}{{\mathbb{R}}}
\newcommand{\la}{\lambda}
\newcommand{\La}{\Lambda}
\newcommand{\vfi}{\varphi}
\newcommand{\h}{H}
\newcommand{\Sg}{\Sigma}
\newcommand{\Om}{\Omega}
\newcommand{\ga}{\gamma}
\newcommand{\de}{\delta}
\newcommand{\volf}[1]{\textup{vol}_\Psi (#1)}
\newcommand{\areaf}[1]{\text{area}_\Psi (#1)}
\newcommand{\perf}[1]{P_\Psi (#1)}
\newcommand{\volpsi}{\volume_\Psi}
\renewcommand{\geq}{\geqslant}
\renewcommand{\leq}{\leqslant}
\DeclareMathOperator{\divv}{div}
\DeclareMathOperator{\volume}{vol}
\DeclareMathOperator{\Jac}{Jac}
\DeclareMathOperator{\Isom}{Isom}
\DeclareMathOperator{\conv}{conv}
\DeclareMathOperator{\symm}{sym}
\newtheorem{theorem}{Theorem}[section]
\newtheorem{lemma}[theorem]{Lemma}
\theoremstyle{definition}
\newtheorem{remark}[theorem]{Remark}
\theoremstyle{remark}
\numberwithin{equation}{section}
\begin{document}

\title{Isoperimetric inequalities in cylinders with density}

\author[K.~Castro]{Katherine Castro} \address{Departamento de
Geometr\'{\i}a y Topolog\'{\i}a \\
Universidad de Granada \\ E--18071 Granada \\ Espa\~na}
\email{ktcastro@ugr.es}


\date{\today}

\thanks{The author has been supported by MICINN-FEDER grants MTM2010-21206-C02-01, MTM2013-48371-C2-1-P, MEC-Feder grants MTM2017-84851-C2-1-P and PID2020-118180GB-I00, and Junta de Andalucía grants A-FQM-441-UGR18 and P20-00164. Open Access funding: Universidad de Granada / CBUA}

\begin{abstract}
Given a compact Riemannian manifold with density $M$ without boundary and the real line $\rr$ with constant density, we prove that isoperimetric regions of large volume in $M\times\rr$ with the product density are slabs of the form $M\times [a,b]$. We previously prove, as a necessary step, the existence of isoperimetric regions in any manifold of density where a subgroup of the group of transformations preserving weighted perimeter and volume acts cocompactly.
\end{abstract}

\subjclass[2000]{}

\maketitle

\thispagestyle{empty}

\bibliographystyle{abbrv} 


\section{Introduction and preliminaries}
In  recent  years,  isoperimetric  problems have been considered in manifolds  with  density. One of the most  interesting spaces of this type is the Gauss space, the Euclidean space $\rr^n$ with the Gaussian density $\Psi(x):=\exp(-\pi|x|^2)$. Borell  \cite{MR399402} and Sudakov and Tirel’son  \cite{ST} independently proved in 1974 and 1975 that half-spaces minimize perimeter under a volume constraint for this density. A new proof was given in 1983 by Ehrhard \cite{MR745081} using symmetrization. In 1997 Bobkov \cite{MR1428506} proved a functional version of this isoperimetric inequality, later extended to the sphere and used to prove isoperimetric estimates for the unit cube by Barthe and Maurey \cite{MR1785389}. Following \cite{MR1428506}, Bobkov and Houdré \cite{MR1396954} considered “unimodal densities”with finite total measure on the real line. These authors explicitly computed the isoperimetric profile for such densities and found some of the isoperimetric solutions. Gromov \cite{MR1978494,MR2481743} studied manifolds with density as “metric measure spaces” and mentioned the natural generalization of mean curvature obtained by the first variation of weighted area. Bakry and Ledoux \cite{MR1374200} and Bayle \cite{bayle-thesis} proved generalizations of the Lévy\textendash Gromov isoperimetric inequality and other geometric comparisons depending on a lower bound on the generalized Ricci curvature of the manifold. Isoperimetric comparisons results in manifolds with density were considered by Maurmann and Morgan \cite{MR2507612}. Existence of isoperimetric sets in $\rr^n$ with density under various hypotheses on the growth of the density were proven by  Morgan and Pratelli \cite{MR3038539} and Milman \cite{milman}, see also De Philippis, Franzina and Pratelli \cite{guidofranzpratelli}. For regularity of isoperimetric regions with density see Sect. 3.10 in paper of Morgan \cite{MR1997594} and see also Pratelli and Saracco \cite{PRATELLI2018}. Boundedness of isoperimetric regions was studied by Cinti and Pratelli  \cite{CintiPratelli2017} and Pratelli and Saracco \cite{PratelliSaracco}. Symmetrization techniques in manifolds with density developed by Ros \cite{rosisoperimetric} and Morgan et al. \cite{MR2895338}.

For nice surveys on manifolds with density the reader is referred to \cite{MR2161354,bayle-thesis,MR3735647} and the references therein.

\medskip 

In this paper, $(M,g,\Psi)$ will denote a manifold with density without boundary, where $g$ is a Riemannian metric on $M$ and $\Psi:M\to\rr$ is a smooth function. We define the weighted \emph{volume} of a set by
\begin{equation}
\label{eq:volumef}
\volf{E}:=\int_E e^\Psi dM,
\end{equation}
where $dM$ is the Riemannian volume element on $(M,g)$. The weighted \emph{area} of a smooth hypersurface $\Sg$ is defined by
\begin{equation}
\label{eq:areaf}
\areaf{\Sg}:=\int_\Sg e^\Psi d\Sg,
\end{equation}
where $d\Sg$ is the Riemannian area element on $\Sg$.

If $E\subset M$, we define the weighted \emph{perimeter} of $E$ in the manifold with density $(M,g,\Psi)$ by
\begin{equation}
\label{eq:perpsi}
\perf{E}:=\sup\bigg\{\int_E\divv_\Psi X\,dM: X\in\frak{X}_0^\infty(M),\ |X|\le 1\bigg\},
\end{equation}
where $\frak X^\infty_0\left(M \right)$ is the set of smooth vector fields on $M$ with compact support and
\begin{equation}
\label{eq:divpsi}
\divv_\Psi(X)=\divv(e^\Psi X),
\end{equation}
and $\divv$ is the Riemannian divergence in $(M,g)$. If $E$ has smooth boundary $\Sg$, then $\perf{E}=\areaf{\Sg}$, see \cite{MR3038539}.

Given a manifold with density, we shall denote by $\Isom(M,g,\Psi)$ the group of isometries of $(M,g)$ preserving the function $\Psi$ (i.e., maps $f:M\to M$ such that $\Psi\circ f=\Psi$). Such isometries preserve the weighted area and volume.

The isoperimetric profile of $(M,g,\Psi)$ is the function $I:[0,+\infty) \to \rr^+$ defined by
\begin{equation}
\label{eq:profile}
I(v)=\inf\{\perf{E}:\volf{E}=v\}
\end{equation}

A set $E\subset M$ of finite weighted perimeter is isoperimetric if $\perf{E}=I(\volf{E})$. This means that $E$ minimizes the weighted perimeter under a weighted volume constraint. Regularity of isoperimetric sets was considered by Morgan and Pratelli \cite{MR3038539}.

Given a manifold with density $(M,g,\Psi)$, we shall consider the cylinders with density $(M\times\rr^k,g\times g_0,\Psi\times 1)$, where $\rr^k$ is $k$-dimensional Euclidean space with its standard Riemannian metric $g_0$, and $(\Psi\times 1)(p,x)=\Psi(p)$ for every $(p,x)\in M\times\rr^k$. Given $v\in\rr^k$, we define $t_v:M\times\rr^k\to M\times \rr^k$ by $t_v(p,x):=(p,x+v)$ for any $(p,x)\in M\times\rr^k$. The set $G:=\{t_v:v\in\rr^k\}$ is contained in $\Isom(M\times\rr^k,g\times g_0,\Psi\times 1)$. In case $M$ is a compact manifold, the quotient of $M\times \rr^k$ by $\Isom(M\times\rr^k,g\times g_0,\Psi\times 1)$ is the compact base $M$ of the product. We focus in this paper in the case $k=1$. 

The aim of this paper is to prove that isoperimetric sets in $(M\times\rr,g\times g_0,\Psi\times 1)$ are slabs of the form $M\times [a,b]$, where $a,b\in\rr$, $a<b$. This result is proven in Theorem~\ref{thm:cylinder} in Section~\ref{sec:slabs}. As a necessary previous step in our proof we must show existence of isoperimetric regions in manifolds with density such that the action of $\Isom(M,g,\Psi)$ is cocompact, that is, the quotient $(M,g,\Psi)/\Isom(M,g,\Psi)$ is compact, like in the case of the cylinders considered in Section~\ref{sec:slabs}. The proof of existence is based on Galli and Ritoré's  in contact sub-Riemannian manifolds, see \cite{gr}.  Since this proof has now become standard, we check in Section~\ref{sec:existence} that the main ingredients are available: a relative isoperimetric inequality for uniform radii, see Theorem~\ref{thm:relisop}; the doubling property, see Theorem~\ref{thm:dp}; and a deformation result for sets of finite perimeter, see Theorem~\ref{DL}. In Theorem~\ref{thm:cylinder} we  characterize the isoperimetric regions of \emph{large volume} in a cylinder with density $M\times\rr$, where $M$ is a compact Riemannian manifold with density, the real line $\rr$ is endowed with a constant density, and the product with the product density. In the non-weighted Riemannian case Duzaar and Steffen \cite{DS}  proved that in $M\times\rr$ isoperimetric sets of large volume are of the form $M\times[a,b]$, where $a,b\in\rr$. For higher dimensional Euclidean factors the problem was considered in \cite{RVLARGE}, where the authors proved that the isoperimetric solutions of large volume in the Riemannian product $M\times\rr^k$ are of the form $M\times B(x,r)$, where $B(x,r)$ is an Euclidean ball, see also \cite{gonzalo2013}.

For more results about variational problems in cylinders the reader is referred to  \cite{fmp} and the references in \cite{ROSALES2021}.

%
%
%
%



%

\section{Existence of isoperimetric regions in $M$}
\label{sec:existence}

In this Section we prove the existence of isoperimetric sets, for any volume, in a manifold with density $(M,g,\Psi)$ such that $\Isom(M,g,\Psi)$ acts cocompactly. The scheme of proof devised by Galli and Ritoré in \cite{gr} applies to our situation, provided we are able to show that
\begin{itemize}
\item There exists $r_0>0$ such that a relative isoperimetric inequality holds in all balls $B(p,r)$, with $p\in M$, $0<r\le r_0$, with a uniform constant.
\item The manifold is doubling. This means the existence of $r_0>0$ and a uniform constant $C_D>0$ such that $\volf{B(p,2r)}\le C_D \volf{B(p,r)}$ for all $p\in M$ and $0<r\le r_0$.
\item A deformation result for finite perimeter sets, see Theorem~\ref{DL}, holds in $(M,g,\Psi)$.
\end{itemize}
Assuming these results hold in $(M,g,\Psi)$, and using the well-known techniques in \cite{gr} we have the following result

\begin{theorem}
\label{thm:existence}
In a manifold with density $(M,g,\Psi)$ such that $\Isom(M,g,\Psi)$ acts cocompactly, isoperimetric sets exist for any given volume.
\end{theorem}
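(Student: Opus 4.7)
The plan is to follow the concentration-compactness scheme of Galli and Ritoré \cite{gr} verbatim, with the three ingredients above playing their respective roles. Fix a volume $v>0$ and take a minimizing sequence of sets of finite weighted perimeter $\{E_k\}$ with $\volf{E_k}=v$ and $\perf{E_k}\to I(v)$. Since $\Isom(M,g,\Psi)$ acts cocompactly, for each $k$ we may choose an isometry $f_k\in\Isom(M,g,\Psi)$ so that $f_k(E_k)$ carries at least a fixed positive fraction of its weighted volume inside a compact set $K_0\subset M$ containing a fundamental domain; a uniform concentration lemma of this type is a direct consequence of the relative isoperimetric inequality (Theorem~\ref{thm:relisop}) and the doubling property (Theorem~\ref{thm:dp}). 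Isometries preserve both weighted volume and weighted perimeter, so $\{f_k(E_k)\}$ is again minimizing.

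The local BV-compactness provided by Theorems~\ref{thm:relisop} and \ref{thm:dp} gives a subsequence converging in $L^1_{\text{loc}}$ to a set $E^{(1)}$ of locally finite perimeter, with $\volf{E^{(1)}}\le v$ and, by lower semicontinuity, $\perf{E^{(1)}}\le I(v)$. If $\volf{E^{(1)}}=v$, then $E^{(1)}$ is the desired isoperimetric region. Otherwise, iterate the recentering on the escaping remainder $f_k(E_k)\setminus K^{(1)}_k$ for suitably growing compact sets $K^{(1)}_k$, extracting a second limit $E^{(2)}$, and so on. At each step a definite positive fraction of the remaining mass is recaptured (again by the concentration lemma), so the iteration exhausts the total mass after at most countably many steps, yielding a family $\{E^{(i)}\}$ with
\begin{equation*}
\sum_i\volf{E^{(i)}}=v,\qquad \sum_i\perf{E^{(i)}}\le I(v),
\end{equation*}
the perimeter bound following from lower semicontinuity applied piece by piece together with the uniform relative isoperimetric inequality used to ensure that the separating cuts do not generate spurious perimeter.

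To assemble an isoperimetric region from the pieces we invoke cocompactness once more: since $\Isom(M,g,\Psi)$ must be non-compact when $M$ is (the compact $M$ case being classical), we translate each $E^{(i)}$ by an isometry so that the images lie at mutually positive distance, and their union $F$ satisfies $\volf{F}=v$ and $\perf{F}=\sum_i\perf{E^{(i)}}\le I(v)$; hence $\perf{F}=I(v)$. The deformation lemma (Theorem~\ref{DL}) is used throughout the argument to keep volumes exactly constrained along the iteration (so that at each step we work with exact volume constraints rather than approximate ones) and to guarantee the continuity of the profile $I$ that is invoked in the perimeter bookkeeping. The main obstacle, and the point where all three ingredients interact most delicately, is this bookkeeping: one must verify that the cuts between consecutive iterations introduce no spurious perimeter and that the captured pieces carry the full perimeter $I(v)$ in the limit. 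Once Theorems~\ref{thm:relisop}, \ref{thm:dp}, and \ref{DL} are in hand, this step is routine and the remainder reduces to the scheme in \cite{gr}.
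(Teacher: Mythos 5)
Your overall scheme is the one the paper follows: it too defers to the concentration--compactness machinery of \cite{gr}, checking only that the three ingredients (Theorems~\ref{thm:relisop}, \ref{thm:dp} and \ref{DL}) are available, and then invoking the isoperimetric inequality for small volumes, boundedness of minimizers, the structure theorem for minimizing sequences, and the concentration lemma from that reference. So in spirit your proposal and the paper's proof coincide.

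There is, however, one concrete gap in your sketch: the final reassembly step. To place the pieces $E^{(i)}$ at mutually positive distance by isometries and conclude $\perf{F}=\sum_i\perf{E^{(i)}}$, you need each $E^{(i)}$ to be \emph{bounded}; two unbounded sets in general cannot be separated by any isometry, and without separation the perimeters need not add. Boundedness of the limit pieces is not automatic and is precisely where Theorem~\ref{DL} does its real work in this argument: following Lemma~4.6 of \cite{gr}, one combines the deformation lemma (which trades a small volume adjustment inside a fixed ball for a controlled, \emph{linear} perimeter change) with the relative isoperimetric inequality to show that an isoperimetric region cannot have arbitrarily long tails of small volume. Your proposal instead assigns the deformation lemma only the secondary roles of ``keeping volumes exactly constrained'' and ``continuity of the profile,'' so as written the key use of that ingredient is missing and the reassembly step is unjustified. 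Relatedly, the bookkeeping that each $E^{(i)}$ is itself isoperimetric for its own volume (needed so that boundedness applies to it) also uses the deformation lemma, to compare competitors after an exact volume correction. Once you route Theorem~\ref{DL} through the boundedness argument, the rest of your outline matches the standard scheme.
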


To prove the required ingredients needed for Theorem~\ref{thm:existence} we start with a preliminary result. We recall that the \emph{convexity radius} $\conv(K)$ of a subset $K$ of a Riemannian manifold M is the infimum of positive numbers $r$ such that the geodesic open ball $B(p,r)$ is convex for every $p\in K$. We call $d$  the Riemannian distance in $(M,g)$.

\begin{lemma}
\label{laLa}
Let $(M,g)$ be a Riemannian manifold, and $K\subset M$ a compact subset. Let $r_0=\conv(K)$. Then there exist 
functions $\la$, $\La:[0,r_0]\to\rr$ such that $1+\la$, $1+\La$ are positive, $\lim_{r\to 0}\la(r)=\lim_{r\to 0}\Lambda(r)=0$, and

\begin{equation}
\label{eq:lL}
(1+\la(r))\,|x-y|\le d(\exp_p(x),\exp_p(y))\le (1+\Lambda(r))\,|x-y|,
\end{equation}
for any $p\in K$ and $x,y\in B(0,r_0)\subset T_pM$.
\end{lemma}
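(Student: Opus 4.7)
The plan is to obtain the claimed inequalities by comparing the Euclidean metric on $T_pM$ with the metric pulled back from $M$ via $\exp_p$, with uniformity over $K$ coming from compactness and continuity of the differential of the exponential map. Since $r_0 = \conv(K) \le \inj(p)$ for every $p \in K$, the exponential map $\exp_p\colon B(0,r_0)\subset T_pM \to B(p,r_0)\subset M$ is a diffeomorphism and the open ball $B(p,r_0)$ is geodesically convex; these are the two structural facts I will use repeatedly.

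First I would encode the pointwise distortion of $d\exp_p|_z\colon T_pM\to T_{\exp_p(z)}M$ (identifying $T_z(T_pM)$ with $T_pM$ via translation) by setting
\[
M(p,z) := \sup_{v\ne 0}\frac{|d\exp_p|_z(v)|_g}{|v|}, \qquad m(p,z) := \inf_{v\ne 0}\frac{|d\exp_p|_z(v)|_g}{|v|}.
\]
Since $d\exp_p|_0=\mathrm{Id}$, both functions equal $1$ at $z=0$. Smoothness of the global exponential map on $TM$ and invertibility of its differential on the injectivity ball make $M$ and $m$ continuous and strictly positive on the compact set $\{(p,z) : p\in K,\ |z|\le r_0\}$. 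Defining
\[
1+\Lambda(r) := \sup\{M(p,z) : p\in K,\ |z|\le r\}, \qquad 1+\lambda(r) := \inf\{m(p,z) : p\in K,\ |z|\le r\},
\]
I obtain continuous functions on $[0,r_0]$ with $1+\Lambda\ge 1$ and $1+\lambda>0$, both tending to $1$ as $r\to 0$.

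For the upper bound in \eqref{eq:lL}, fix $p\in K$ and $x,y\in B(0,r)\subset T_pM$. The Euclidean segment $\gamma(t)=x+t(y-x)$ stays in $B(0,r)$ by convexity, so $\exp_p\circ\gamma$ joins $\exp_p(x)$ and $\exp_p(y)$ in $M$ with Riemannian length at most $(1+\Lambda(r))|y-x|$, which bounds $d(\exp_p(x),\exp_p(y))$ from above. For the lower bound I take the minimizing geodesic $\sigma\colon [0,L]\to M$ from $\exp_p(x)$ to $\exp_p(y)$ with $L = d(\exp_p(x),\exp_p(y))$, parameterized by arc length. Since $r\le r_0 = \conv(K)$, the ball $B(p,r)$ is convex, so $\sigma$ remains inside it; thus $\tilde\sigma := \exp_p^{-1}\circ\sigma$ is a well-defined curve in $B(0,r)\subset T_pM$ from $x$ to $y$, and
\[
L = \int_0^L |d\exp_p|_{\tilde\sigma(t)}(\tilde\sigma'(t))|_g\,dt \ge (1+\lambda(r))\int_0^L |\tilde\sigma'(t)|\,dt \ge (1+\lambda(r))\,|y-x|,
\]
the last inequality because the Euclidean length of $\tilde\sigma$ dominates the straight-line distance $|y-x|$.

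The only real technical point is the uniform behavior of $M$ and $m$ over $K$, which reduces to the compactness of $K$ together with smoothness of the exponential map on $TM$ and nonsingularity of $d\exp_p$ on the injectivity ball; no curvature input is needed for this qualitative statement. The rest is a routine length computation combined with the convexity of $B(p,r)$ and of Euclidean balls in $T_pM$.
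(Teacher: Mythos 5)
Your proof is correct and follows essentially the same strategy as the paper's: uniform two-sided control of the distortion of $\exp_p$ over the compact set $\{(p,z):p\in K,\,|z|\le r\}$, combined with a length comparison in which convexity of $B(p,r)$ (resp.\ of Euclidean balls) guarantees that the competitor curves stay inside the chart. The only cosmetic difference is that you phrase the distortion via the singular values of $d\exp_p|_z$ and work with the minimizing geodesic directly, whereas the paper uses the metric components $g^q_{ij}$ in exponential coordinates and a finite cover of $K$ by coordinate neighborhoods; these are equivalent.
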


\begin{proof}
Given $p\in K$, consider a compact coordinate neighborhood $U$ around $p$ with a global orthonormal basis defined on $U$. Given $q\in U$, let $g_{ij}^q$ be the components of the Riemann tensor in the coordinate neighborhood defined by the exponential map $\exp_q:B(0,r_0)\subset T_qM\to B(q,r_0)$ and the global orthornormal basis. The functions $g_{ij}^q$ depend smoothly on $q$. We define
\begin{equation}
\begin{split}
\alpha_U(r):=&\min_{q\in U}\bigg\{\bigg(\sum^n_{i,j=1} g^q_{ij}(z)\,v_iv_j\bigg)^\frac12: |z|\leq r < r_0, \sum^n_{i=1} v^2_i=1\bigg\},
\\
\beta_U(r):=&\max_{q\in U}\bigg\{\bigg(\sum^n_{i,j=1} g^q_{ij}(z)\,v_iv_j\bigg)^\frac12: |z|\leq r < r_0, \sum^n_{i=1} v^2_i=1 \bigg\}.
\end{split}
\end{equation}
It is easy to check that $\alpha_U(r)$ is decreasing, $\beta_U(r)$ is increasing, and that \[
\lim_{r\to 0} \alpha_U(r)=\lim_{r\to 0}\beta_U(r)=1.
\]

Given $q\in U$, we take $ x,y \in B(0,r_0)$. 
To compute the distance $d$ between the points $\exp_q(x)$, $\exp_q(y)$, it is enough to consider curves inside the convex ball $B(0,r)$. Let  $\gamma :I \rightarrow B(0,r_0) $ be a curve joining $x$ and $y$. Then
\begin{equation}
d(\exp _q (x),\exp _q (y))=L(\exp_q\circ\gamma)=\int_I \bigg( \sum^n_{i,j=1} g^q_{ij} (\gamma(t)) \, \gamma_i'(t)\gamma_j'(t) \bigg)^\frac12 dt.
\end{equation}
Observe that
\begin{equation}
\alpha_U(r)\bigg(\sum_{i=1}^n\ga_i'(t)^2\bigg)^{1/2}\le \sum^n_{i,j=1} g^q_{ij}(\gamma(t)) \, \gamma_i'(t)\gamma_j'(t)\le \beta_U(r)\bigg(\sum_{i=1}^n\ga_i'(t)^2\bigg)^{1/2}.
\end{equation}
The left quantity is larger than or equal to $\alpha_U(r)\,|x-y|$. Since $\gamma$ is an arbitrary curve joining $x$ and $y$, this implies $\alpha_U(r)\,|x-y|\le d(\exp_q(x),\exp_q(y))$. On the other hand,
\[
d(\exp_q(x),\exp_q(y))\le L(\exp_q\circ\gamma)\le \beta_U(r)\,|x-y|,
\]
so we have
\begin{equation}
\alpha_U (r) \leq \frac{d(\exp_q(x),\exp_q(y))}{|x-y|} \leq \beta_U (r).
\end{equation}
The result follows by covering the compact set $K$ by a finite number of coordinate neighborhoods $U$, and taking $\alpha(r)$ as the minimum of the $\alpha_U(r)$ and $\beta(r)$ as the maximum of the $\beta_U(r)$. Setting $\lambda(r)=\alpha(r)-1$, $\Lambda(r)=\beta(r)-1$ the result follows.
\end{proof}

Using Lemma~\ref{laLa} we are obtain as corollaries the existence of a uniform relative isoperimetric inequality and the existence of a doubling constant.

\begin{theorem}[Relative isoperimetric inequality in $(M,g,\Psi)$]
\label{thm:relisop}
Let $(M,g,\Psi)$ be an $n$-dimensional manifold with density and $K\subset M$  a compact subset. Let $r_0>0$ be the radius obtained in Lemma~\ref{laLa}. Then for all $p\in K$ 
and $E\subset B(p,r)$ with $0<r\leq r_0$, there exists a positive constant $C>0$ not depending on $p$, such that
\begin{equation}
\label{eq:R}
\perf {E,B(p,r)}\geq C\cdot \min {\big \{ \volf{E},\, \volf{B(p,r)\setminus E}\big \} }^{(n-1)/n}.
\end{equation}

In particular, \eqref{eq:R} holds in the whole manifold if $\Isom(M,g,\Psi)$ acts cocompactly on $M$, see Lemma 3.5 in \cite{gr}.
\end{theorem}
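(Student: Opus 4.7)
The plan is to use Lemma~\ref{laLa} to pull back $E$ via $\exp_p^{-1}$ to a Euclidean ball in $T_pM$, apply the classical relative isoperimetric inequality there, and then transfer the estimate back to $B(p,r)$, taking care that every comparison constant is uniform in $p\in K$.

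More precisely, fix $p\in K$ and let $\tilde E:=\exp_p^{-1}(E)\subset B(0,r)\subset T_pM$. By \eqref{eq:lL}, $\exp_p$ is bi-Lipschitz with constants $1+\la(r_0)$ and $1+\La(r_0)$ independent of $p$, so its tangential Jacobian is bounded above and below by positive constants. Consequently both the Riemannian $n$-volume on $B(p,r)$ and the $(n-1)$-dimensional Hausdorff measure on hypersurfaces inside $B(p,r)$ transfer to their Euclidean counterparts on $B(0,r)$ with uniformly bounded ratios. Moreover, since $\Psi$ is continuous on the compact set $K':=\{q\in M:d(q,K)\le r_0\}$, the density $e^\Psi$ is pinched between two positive constants on $K'$. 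Combining these two facts, there exist constants $c_1,c_2,c_3,c_4>0$ depending only on $n$, $\la(r_0)$, $\La(r_0)$, and $\max_{K'}|\Psi|$ such that
\begin{equation*}
c_1\,|\tilde E|\le \volf{E}\le c_2\,|\tilde E|,\qquad c_3\,P(\tilde E,B(0,r))\le \perf{E,B(p,r)}\le c_4\,P(\tilde E,B(0,r)),
\end{equation*}
where $|\cdot|$ and $P(\cdot,\cdot)$ denote Euclidean volume and perimeter in $T_pM$. For the perimeter comparison I would invoke the area formula for bi-Lipschitz maps applied to the reduced boundary of $\tilde E$, or equivalently approximate $\tilde E$ by smooth sets and use the lower semicontinuity of the weighted perimeter.

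Now I would apply the classical Euclidean relative isoperimetric inequality in $B(0,r)$ to $\tilde E$; the constant there depends only on $n$ (scaling invariance). Substituting the three comparisons above produces \eqref{eq:R} with a constant $C$ depending on $n$, $\la(r_0)$, $\La(r_0)$, and $\max_{K'}|\Psi|$, hence uniform in $p\in K$ and in $r\in(0,r_0]$. For the final assertion, when $\Isom(M,g,\Psi)$ acts cocompactly one picks $K$ to be a compact set whose $\Isom$-translates cover $M$; since $\volf{\cdot}$ and $\perf{\cdot,\cdot}$ are invariant under $\Isom(M,g,\Psi)$, the uniform estimate on $K$ transfers to every ball in $M$. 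I expect the only step requiring real care to be the bi-Lipschitz transfer of weighted perimeter for merely finite-perimeter sets; this is standard but is the one place where one cannot rely on a one-line calculation.
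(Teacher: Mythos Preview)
Your proposal is correct and follows essentially the same route as the paper: pull back through $\exp_p^{-1}$ using the bi-Lipschitz bounds of Lemma~\ref{laLa}, bound $e^\Psi$ uniformly on the compact neighbourhood of $K$, apply the Euclidean relative isoperimetric inequality in $B(0,r)$, and transfer back. The paper handles the perimeter comparison exactly via the reduced-boundary representation $\perf{E,B(p,r)}=\int_{\partial_\star E\cap B(p,r)}e^\Psi\,d\h^{n-1}$, which is the rigorous version of the step you flagged as needing care.
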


\begin{proof}
By Lemma~\ref{laLa}, for $0<r\le r_0$, the exponential map $f=\exp_p:B(0,r)\to B(p,r)$ is a diffeomorphism. Let $F=f^{-1}(E)$. Then 
\begin{equation}
\volf{f(F)}=\volf{E}=\int_E e^\Psi d\h^n, 
\end{equation}
where $H^n$ is the $n$-dimensional Hausdorff measure in $(M,g)$. Consider positive functions constants $a,b>0$ so that $a\leq e^\Psi \leq b$ in $\overline{B}(p,r_0)$ for all $p\in K$. So we have
\begin{equation}
a\cdot \h^n (E)=a \int_E d\h^n\leq \int_E e^\Psi d\h^n\leq b\int_E  d\h^n=b\cdot \h^n (E).
\end{equation}
As $f$ is Lipschitz,  Lemma  \ref{laLa} in dimension $n$ implies
\begin{equation}
\label{eq:H}
a(1+\la(r))^n\h^n_0(F)\leq a\cdot \h^n (E)\leq \volf{E}\leq b\cdot \h^n (E)\leq b(1+\La(r))^n\h^n_0(F),
\end{equation}
where $\h^n_0$ is the $n$-dimensional Hausdorff measure  with respect to the Euclidean metric.

On the other hand, by \S~2 in \cite{MR3038539}, 
$$
\perf{E,B(r,p)}=\int_{\ptl_\star E\cap B(p,r)} e^\Psi d\h^{n-1},
$$
where $\ptl_\star E$ is the reduced boundary of $E$.
And therefore,
\begin{align*}
\perf{E,B(p,r)} &\leq b\h^{n-1}(\ptl_{\star} E\cap B(p,r)) \\
                         &\leq b(1+\La(r))^{n-1} \h^{n-1}_0(\ptl_{\star} F\cap B(0,r)) \\
                         &= b(1+\La(r))^{n-1} P_0(F,B(0,r)),
\end{align*}
where $P_0$ is the Euclidean perimeter.
By a similar computation we obtain 
$$
\perf{E,B(p,r)}\geq a(1+\la(r))^{n-1}P_0(F,B(0,r)).
$$
Then
$$
a(1+\la(r))^{n-1}P_0(F,B(0,r))\leq \perf{E,B(p,r)} \leq b(1+\La(r))^{n-1} P_0(F,B(0,r)).
$$
Observe that
\begin{align*}
\perf{E,&B(p,r)}  \geq a(1+\la(r))^{n-1}P_0(F,B(0,r)) \\
                        & \geq a(1+\la(r))^{n-1}\cdot C_0 \cdot \min{\big \{ \h^n(F),\,\h^n(B(0,r)\setminus F) \big \}^{(n-1)/n}}\\
                        & \geq a(1+\la(r))^{n-1}\cdot C_0 \cdot \min{\bigg\{\frac {\volf{E}}{(b(1+\La(r)))^n}, \frac{\volf{B(p,r)\setminus E}}{(b(1+\La(r)))^n}\bigg\}^{\tfrac{n-1}{n}}},
\end{align*}
where $C_0$ is the constant in the relative isoperimetric (Poincaré) inequality in Euclidean balls. Thus,
$$
\perf{E,B(p,r)}\geq\frac{a(1+\la(r))^{n-1}}{b(1+\La(r))^{n-1}}\cdot C_0 \cdot \min{\big \{ \volf{E}, \volf{B(p,r)\setminus E}\big \}^{\tfrac{n-1}{n}}}.\qedhere
$$
\end{proof}

In the following result we prove that $(M,g,\Psi)$ is a doubling metric space.

\begin{theorem}[Doubling property] 
\label{thm:dp}
Let $(M,g,\Psi)$ be an $n$-dimensional manifold with density and $K\subset M$  a compact subset. Let $r_0>0$ be the radius obtained in Lemma~\ref{laLa}. Then there exists a constant $C_D>0$, only depending on $K$, such that for all $x_0\in K$ and $0<r\le r_0/2$ we have
\begin{equation} {\label{eq:DP}}
\volf{B\left(x_{0},2r\right)}\leq C_D\volf{B\left(x_{0},r\right)}.
\end{equation}

In particular, \eqref{eq:DP} holds in the whole manifold if $\Isom(M,g,\Psi)$ acts cocompactly on $M$.
\end{theorem}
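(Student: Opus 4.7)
The plan is to adapt the proof of Theorem~\ref{thm:relisop} almost verbatim: pull the balls $B(x_0,r)$ and $B(x_0,2r)$ back to $T_{x_0}M$ via the exponential map, use the bi-Lipschitz estimate from Lemma~\ref{laLa} together with uniform bounds on $e^\Psi$ to compare weighted volume with Lebesgue measure, and then exploit the trivial scaling doubling of Euclidean balls.

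Concretely, I fix $x_0\in K$ and $0<r\le r_0/2$, and set $f=\exp_{x_0}\colon B(0,r_0)\to B(x_0,r_0)$, which is a diffeomorphism by the choice of $r_0$ as a convexity radius. Compactness of $K$ (or more precisely of its closed $r_0$-neighborhood, which is compact when $M$ is complete) supplies constants $0<a\le e^\Psi\le b$ on every $\overline{B}(x_0,r_0)$ with $x_0\in K$. Applying \eqref{eq:lL} with $y=0$ yields the inclusions
\begin{equation*}
f^{-1}(B(x_0,2r))\subset B\bigg(0,\frac{2r}{1+\lambda(r_0)}\bigg),\qquad B\bigg(0,\frac{r}{1+\Lambda(r_0)}\bigg)\subset f^{-1}(B(x_0,r)),
\end{equation*}
and the same chain of Hausdorff-measure comparisons used in \eqref{eq:H} then gives
\begin{gather*}
\volf{B(x_0,2r)}\le b\,(1+\Lambda(r_0))^n\,\omega_n\bigg(\frac{2r}{1+\lambda(r_0)}\bigg)^n, \\
\volf{B(x_0,r)}\ge a\,(1+\lambda(r_0))^n\,\omega_n\bigg(\frac{r}{1+\Lambda(r_0)}\bigg)^n,
\end{gather*}
where $\omega_n$ denotes the Lebesgue volume of the Euclidean unit ball.

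Dividing the two estimates, the factors $r^n$ and $\omega_n$ cancel and one is left with the doubling constant
\begin{equation*}
C_D\;:=\;\frac{b}{a}\cdot 2^n\cdot\bigg(\frac{1+\Lambda(r_0)}{1+\lambda(r_0)}\bigg)^{2n},
\end{equation*}
which depends only on $K$. The extension to the whole manifold in the cocompact case is immediate, since elements of $\Isom(M,g,\Psi)$ preserve both the Riemannian distance $d$ and the weighted volume $\volf{\cdot}$, and can therefore transport the estimate from $K$ to any $x_0\in M$. No serious obstacle is expected; the only mild technicality is step one, namely securing uniform density bounds on the $r_0$-neighborhood of $K$, which is automatic once one works inside a complete ambient manifold.
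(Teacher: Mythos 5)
Your proof is correct and follows essentially the same route as the paper: pull back by the exponential map, compare weighted volume with Euclidean Lebesgue measure via Lemma~\ref{laLa} and the uniform bounds $a\le e^\Psi\le b$, then use the $2^n$ scaling of Euclidean balls. The only (harmless) difference is that the paper identifies $\exp_{x_0}^{-1}(B(x_0,s))$ with the Euclidean ball $B(0,s)$ exactly (valid within the injectivity radius), whereas you only use the bi-Lipschitz inclusions, which costs an extra factor $\big((1+\Lambda(r_0))/(1+\lambda(r_0))\big)^{2n}$ in the doubling constant but changes nothing essential.
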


\begin{proof} From equation \eqref{eq:H} we know that, for $r_{0}$, $0<r<r_{0}$, we 
must have
\[
AH_{0}^{n}\left(F\right)\leq\volf{E}\leq BH_{0}^{n}\left(F\right),
\]
where $A=a\left(1+\lambda\left(r\right)\right)^{n}$, $B=b\left(1+\Lambda\left(r\right)\right)^{n}$
and $\mbox{exp}_{x_{0}}\left(F\right)=E$ for all $x_{0}\in M$.

In particular,
\[
AH_{0}^{n}\left(B\left(0,2r\right)\right)\leq\volf{B\left(x_{0},2r\right)}\leq BH_{0}^{n}\left(B\left(0,2r\right)\right)
\]
and
\[
AH_{0}^{n}\left(B\left(0,r\right)\right)\leq\volf{B\left(x_{0},r\right)}\leq BH_{0}^{n}\left(B\left(0,r\right)\right).
\]
Thus
\[
\frac{\volf{B\left(x_{0},2r\right)}}{\volf{B\left(x_{0},r\right)}}\leq\frac{BH_{0}^{n}\left(B\left(0,2r\right)\right)}{AH_{0}^{n}\left(B\left(0,r\right)\right)}\leq\frac{B}{A}\frac{H_{0}^{n}\left(B\left(0,1\right)\right)\cdot\left(2r\right)^{n}}{H_{0}^{n}\left(B\left(0,1\right)\right)\cdot\left(r\right)^{n}}=2^{n}\frac{B}{A}.
\]
Therefore, $\volf{B\left(x_{0},2r\right)}\leq C_D\volf{B\left(x_{0},r\right)}$
with $C_D=2^{n}\underset{K}\sup\,\frac{B}{A}$. Note that $C_D$ is finite by Lemma \ref{laLa} and strictly positive by equation \eqref{eq:lL}.
\end{proof}

\begin{theorem}[Deformation of finite perimeter sets]
 {\label{DL}}
 Let $E\subset M$ be a set of locally finite weighted
perimeter. Assume that $\perf{E,B(p,r)}>0$ for some $p\in M$ and $r>0$. Then there exists a deformation $\left\{ E_{t}\right\} _{t\in\left(-\delta,\delta\right)}$
of $E$, with $E_{0}=E$, by sets of locally finite perimeter, and a constant $C=C(p,r,\delta)$ such that
\begin{enumerate}
\item $\volf{E_t}=\volf{E}+t$.\label{0}
\item $E\triangle E_{t}\subset B\left(p,r\right),$\label{1}
\item $\left|\perf{E}-\perf{E_{t}}\right|\leq C\left|\volf{E}-\volf{E_{t}}
\right|\leq C\left|\volf{E\triangle E_{t}}\right|.$\label{2}
\end{enumerate}
\end{theorem}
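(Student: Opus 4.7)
The plan is to construct the deformation $\{E_t\}$ as the image of $E$ under the flow of a smooth, compactly supported vector field, reparametrized in time so that the weighted volume varies at unit speed. Since $\perf{E,B(p,r)}>0$, the weighted reduced boundary $\ptl_\star E$ meets $B(p,r)$ in positive weighted area. First I would pick a point $q\in \ptl_\star E\cap B(p,r)$ where the measure-theoretic outer unit normal $\nu$ exists. Working in a geodesic ball around $q$ contained in $B(p,r)$ and small enough that $\nu$ is nearly parallel to $\nu(q)$ on the reduced boundary (via the blow-up characterization of $\ptl_\star E$), I would choose $X\in\frak{X}_0^\infty(M)$ with $\spt X$ in this small ball, pointing in the direction $\nu(q)$, and normalized so that
\begin{equation*}
\int_{\ptl_\star E}\escpr{X,\nu}\,e^\Psi\,d\mathcal H^{n-1}=1.
\end{equation*}

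Let $\phi_s$ be the global flow of $X$, and set $\widetilde E_s:=\phi_s(E)$. Because $\phi_s$ is the identity outside $\spt X\subset B(p,r)$, one has $E\triangle\widetilde E_s\subset B(p,r)$ automatically. The weighted divergence theorem yields
\begin{equation*}
\frac{d}{ds}\bigg|_{s=0}\volf{\widetilde E_s}=\int_E \divv_\Psi X\,dM=\int_{\ptl_\star E}\escpr{X,\nu}\,e^\Psi\,d\mathcal H^{n-1}=1,
\end{equation*}
so $s\mapsto \volf{\widetilde E_s}$ is smooth with nonzero derivative at $s=0$. The inverse function theorem then produces $\delta>0$ and a smooth reparametrization $t\mapsto s(t)$ on $(-\delta,\delta)$ with $s(0)=0$ and $\volf{\widetilde E_{s(t)}}=\volf{E}+t$. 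Defining $E_t:=\widetilde E_{s(t)}$ gives properties (i) and (ii) at once.

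For (iii) I would bound $s\mapsto \perf{\widetilde E_s}$ by its Lipschitz constant. Pulling back test fields $Y$ with $|Y|\le 1$ through $\phi_s$ in the variational definition \eqref{eq:perpsi} and using the smooth Jacobian expansion of $\phi_s$, one obtains that $s\mapsto \perf{\widetilde E_s}$ is locally Lipschitz on $(-\delta,\delta)$ with a constant $C_0$ depending only on $X$, on $\Psi$ restricted to $\spt X$, and on the initial quantity $\perf{E,\spt X}$. Composing with the Lipschitz reparametrization $s(t)$ yields
\begin{equation*}
|\perf{E_t}-\perf{E}|\le C\,|t|=C\,|\volf{E_t}-\volf{E}|,
\end{equation*}
and the second inequality in (iii) follows from the identity $\volf{E_t}-\volf{E}=\volf{E_t\setminus E}-\volf{E\setminus E_t}$ and the triangle inequality.

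The main technical obstacle is the rigorous justification of the Lipschitz dependence of the weighted perimeter on the flow parameter when $E$ is merely of locally finite perimeter, rather than smooth. In the classical unweighted setting this is handled by a change-of-variables argument applied directly in the supremum that defines perimeter (see Giusti's monograph on BV functions), and the adaptation to the weighted setting, absorbing the $e^\Psi$ and $e^{-\Psi}$ factors, is the route already followed in Morgan-Pratelli \cite{MR3038539}, which the excerpt uses for the weighted reduced boundary formalism. Once this variational machinery is in place, all three conclusions follow directly from the construction above.
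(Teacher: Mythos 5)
Your proposal is correct and follows essentially the same route as the paper: flow of a compactly supported vector field with positive weighted flux through $E$, the inverse function theorem to reparametrize so that the weighted volume varies at unit speed, and a Jacobian/change-of-variables estimate giving Lipschitz dependence of the weighted perimeter on the flow parameter. The only immaterial differences are that the paper obtains $X$ directly from the supremum definition of $\perf{E,B(p,r)}$ rather than from a blow-up at a point of $\ptl_\star E$, and carries out the perimeter estimate via the area formula on the reduced boundary rather than by pulling back test fields in the variational definition.
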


\begin{proof} Since $E$ is a set of locally finite perimeter and $\perf{E,B\left(p,r\right)}>0$
there exists  a vector field $X$, with $\left\Vert X\right\Vert \leq1$, such that $X\in\frak{X}_{0}^{\infty}\left(B\left(p,r\right)\right)$ and $\int_E \divv_\Psi(X)\,dM>0$.  Let $\{ \varphi_{s}\}_{s\in\rr}$ be the flow associated to $X$.
Since the map
\[
s\mapsto\volf{\varphi_s(E)}
\]
is differentiable and its derivative at $s=0$ is $\int_E\divv_\Psi(X)dM>0$, we can apply the inverse function theorem to find $\delta>0$ and a function $g:(-\delta,\delta)\to\rr$ such that $g(0)=0$ and $\volf{\varphi_{g(t)}(E)}=\volf{E}+t$.
Let $E_{t}=\varphi_{g(t)}\left(E\right)$. This proves (\ref{0}). If necessary we can reduce $\delta$ so that, for $|t|\leq \de$ we have
\begin{equation}
\label{eq:estvol}
\big|\volf{E_t\cap B(p,r)}-\volf{E\cap B(p,r)}\big|\geq \bigg|\frac{t}{2} \bigg(\underset{E\cap B(p,r)}{\int}\divv_\Psi X\,dM\bigg)^{-1}\bigg|.
\end{equation}

As $\varphi_t(q)=q$ for all $q\not\in B(p,r)$ and $t\in\rr$ we trivially have $E\triangle E_t \subset B(p,r)$ for all $t\in\mathbb{R}$. This proves (\ref{1}).

To prove (\ref{2}) note that, for all $t\in (-\delta,\delta)$, we have
\begin{align*}
\perf{E_t,B(p,r)} & = \perf{\varphi_{g(t)}(E),B(p,r)}=\int_{\ptl_\star\varphi_{g(t)}(E)\cap B(p,r)} e^\Psi  d\h^{n}\\
& = \int_{\ptl_\star E \cap B(p,r)}\frac{(e^\Psi \circ \varphi_t)}{e^\Psi}e^\Psi|\text{Jac}(\varphi_{g(t)})|d\h^{n}
\end{align*}
Hence we have
\begin{align*}
|\perf{E_t,B(p,r)}&-\perf{E,B(p,r)}|=
\\
& = \bigg|\underset{\ptl_\star E \cap B(p,r)}{\int}
\frac{(e^\Psi \circ \varphi_{g(t)})}{e^\Psi}e^\Psi|\text{Jac}(\varphi_{g(t)})|d\h^{n}-
\underset{\ptl_\star E\cap B(p,r)}{\int} e^\Psi  d\h^{n}\bigg|\\
& =  \bigg|\underset{\ptl_\star E \cap B(p,r)}{\int}
\bigg(\frac{(e^\Psi \circ \varphi_{g(t)})}{e^\Psi}|\text{Jac}(\varphi_{g(t)})|-1\bigg)e^\Psi d\h^{n}\bigg|\\
& \leq  \underset {\underset {q\in B(p,r)} {t\in (-\de,\de)}} {\sup}
\bigg|\frac{e^\Psi \circ \varphi_{g(t)} (q)}{e^\Psi (q)}|\text{Jac}(\varphi_{g(t)})(q)|-1\bigg| 
\,\perf{E,B(p,r)}
\end{align*}

Taking into account \eqref{eq:estvol} we have, for $t\in(-\de,\de) \backslash\{0\}$,
\begin{equation*}
\frac{|\perf{E_t,B(p,r)}-\perf{E,B(p,r)}|}{\left|\volf{E_t\cap B(p,r)}-\volf{E\cap B(p,r)}\right|} \leq 
 \frac{2 h(t,p,r,\delta)\cdot\perf{E,B(p,r)}}
 {\left|\bigg( \underset{E\cap B(p,r)}{\int}\divv_\Psi X\,dM \bigg)^{-1}\right|}< C,
\end{equation*}
where
\[
h(t,p,r,\delta)=\frac{1}{|t|}\underset{\underset {t\in(-\delta,\delta)\backslash\{0\}}{q\in B(p,r)}} {\sup}\left|\frac{e^\Psi \circ \vfi_{g(t)}(q)}{e^\Psi(q)}|\Jac(\vfi_{g(t)})(q)|-1\right|
\]
and $C$ is a positive constant which depends on
$p$, $r$ and $\delta$. note that (\ref{2}) is trivially true for $t=0$ and any constant $C$. 
So we have
\[
|\perf{E_t,B(p,r)}-\perf{E,B(p,r)}|\leq C \left|\volf{E_t\cap B(p,r)}-\volf{E\cap B(p,r)}\right|,
\]
and this fact together with (\ref{1}) implies the inequality of the left side of (\ref{2}). Note that (\ref{2}) is trivially true for $t=0$ and any constant $C$.

On the other hand, for any positive measure $\mu$ we have
\[
\big|\mu(E)-\mu(E')\big|\le \mu(E\triangle E').
\]
This completes the proof of (\ref{2}).
\end{proof}

To conclude this section, we sketch the proof of Theorem \ref{thm:existence}. Since this proof has become standard after \cite{gr}, we include some basic guidelines for reader's convenience. 

\begin{proof}[Proof of Theorem~\ref{thm:existence}] Using the relative isoperimetric inequality \eqref{eq:R}, the doubling property \eqref{eq:DP} and the hypothesis that $\Isom(M,g,\Psi)$ acts cocompactly on $M$, an isoperimetric inequality for small volumes can be obtained in a standard way, see Lemma~3.10 in \cite{gr}. Combining the latter with the deformation property of finite perimeter sets proven in Theorem~\ref{DL}, and using again that $\Isom(M,g,\Psi)$ acts cocompactly, we can prove that the isoperimetric solutions are bounded, see Lemma 4.6 in \cite{gr}. The Structure Theorem for minimizing sequences of sets of positive volume $v>0$, see Proposition 5.1 in \cite{gr}, works also in our case without modification. From them the Concentration Lemma~6.2 in \cite{gr}, and the Existence Theorem~6.1 in \cite{gr} work without relevant modifications.
\end{proof}

\section{Isoperimetric regions in $M\times\rr$}
\label{sec:slabs}

In this section we prove existence of isoperimetric regions in a cylinder with density for large volumes.
We shall need the following preliminary results in the proof of Theorem~\ref{thm:cylinder}.

\begin{lemma}
{\label{DIVG}}
Let $(M,g,\Psi)$ be a compact manifold with density. Then there exist constants $c_1,c_2>0$, only depending on $M$, such that, for any set $E\subset M$ of finite perimeter with $\volpsi(E)\le\volpsi(M)/2$ we have
\begin{enumerate}
\item $\perf{E}\ge c_1\volpsi(E)$, and
\item $\perf{E}\ge c_2\volpsi(E)^{(n-1)/n}$.
\end{enumerate}
\end{lemma}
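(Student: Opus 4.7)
The plan is to prove $(ii)$ first and then deduce $(i)$ from it by an elementary scaling. If $\volpsi(E)\le\volpsi(M)/2$, then $\volpsi(E)^{(n-1)/n}\ge(\volpsi(M)/2)^{-1/n}\,\volpsi(E)$, so $(ii)$ yields $(i)$ with $c_1:=c_2\,(\volpsi(M)/2)^{-1/n}$. The rest of the argument is devoted to $(ii)$, which I attack by splitting the range $\volpsi(E)\in(0,\volpsi(M)/2]$ into a small-volume regime and a bounded-below regime.

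For the small-volume regime I plan to apply Theorem~\ref{thm:relisop}. Since $M$ is compact I cover it by finitely many balls $B(p_1,r_0),\dots,B(p_N,r_0)$, with $r_0$ as in Lemma~\ref{laLa}, arranged so that the multiplicity of the covering is bounded by some integer $k$. Setting $v_0:=\tfrac12\min_i\volpsi(B(p_i,r_0))>0$, any $E$ with $\volpsi(E)\le v_0$ satisfies $\volpsi(E_i)\le\volpsi(B(p_i,r_0))/2$ for $E_i:=E\cap B(p_i,r_0)$, so Theorem~\ref{thm:relisop} gives $\perf{E,B(p_i,r_0)}=\perf{E_i,B(p_i,r_0)}\ge C\,\volpsi(E_i)^{(n-1)/n}$ with a constant $C$ independent of $i$. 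Summing over $i$, using the bounded-overlap estimate $\sum_i\perf{E,B(p_i,r_0)}\le k\,\perf{E}$, the subadditivity $(\sum_i a_i)^{(n-1)/n}\le\sum_i a_i^{(n-1)/n}$ for $a_i\ge 0$, and $\sum_i\volpsi(E_i)\ge\volpsi(E)$ (since the balls cover $M$), I obtain $\perf{E}\ge (C/k)\,\volpsi(E)^{(n-1)/n}$ throughout the small-volume regime.

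For the complementary range $\volpsi(E)\in[v_0,\volpsi(M)/2]$ my plan is to establish a uniform lower bound $\perf{E}\ge c>0$; combined with the trivial inequality $\volpsi(E)^{(n-1)/n}\le(\volpsi(M)/2)^{(n-1)/n}$, this yields $(ii)$ on the remaining range. I intend to argue by contradiction and $BV$-compactness: since $e^\Psi$ is bounded above and below by positive constants on compact $M$, weighted perimeter and volume are comparable to their Riemannian counterparts, so a hypothetical sequence $E_k$ violating the lower bound would have uniformly bounded Riemannian perimeter and a subsequence of $\chi_{E_k}$ would converge in $L^1(M)$ to some $\chi_{E_\infty}$; lower semicontinuity of weighted perimeter and continuity of weighted volume under $L^1$ convergence would then force $\perf{E_\infty}=0$ with $\volpsi(E_\infty)\in[v_0,\volpsi(M)/2]$, which is impossible on connected $M$.

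The main obstacle will be this bounded-below regime. The $BV$-compactness step is routine but implicitly uses connectedness of $M$; alternatively, one can obtain the same lower bound by invoking Theorem~\ref{thm:existence} (the trivial group acts cocompactly on compact $M$) together with continuity and positivity of the isoperimetric profile on $(0,\volpsi(M)/2]$. Either route produces the constants, and one finally takes $c_2:=\min\{C/k,\,c\,(\volpsi(M)/2)^{-(n-1)/n}\}$ and $c_1:=c_2\,(\volpsi(M)/2)^{-1/n}$.
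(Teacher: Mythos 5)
Your proposal is correct and follows the same strategy the paper compresses into a single line: the paper simply asserts that the isoperimetric profile of a compact $(M,g,\Psi)$ is strictly positive and asymptotic to $t^{(n-1)/n}$ for small $t$, which is exactly your two-regime split (small volumes via Theorem~\ref{thm:relisop} and a finite covering, volumes bounded below via compactness/positivity of the profile), with the deduction of $(i)$ from $(ii)$ being the same elementary comparison. Your version just supplies the details the paper omits, and your remark that connectedness of $M$ is needed in the second regime is apt, since the lemma itself fails for disconnected $M$.
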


\begin{proof}
It follows easily since the isoperimetric profile of $(M,g,\Psi)$ is strictly positive and asymptotic to the function $t\mapsto t^{(n-1)/n}$ for $t>0$ small.
\end{proof}

We say that $E\subset N=M\times\rr$ is a \emph{normalized set} if the intersection $E_p=E\cap (\{p\}\times\rr)$ is either empty or a vertical segment centered at $(p,0)$ for all $p\in M$. Notice that a normalized set is invariant by the reflection $\sigma:M\times\rr \to M \times \rr$ defined by $\sigma(p,t)=(p,-t)$, an isometry of $(M\times\rr,g\times g_0)$ preserving the weighted volume. Given any set $E\subset N$, we denote by $E^*$ the projection of $E$ over $M$. From now on we denote $(E_t)^*$ by $E^*_t$ to simplify the notation. Notice that for normalized sets one has $E_{t}^*\subset E_s^*$ whenever $|s|\le |t|$. We denote by $P$ and $\volume$ the perimeter and volume in the manifold with density $(M\times\rr,g\times g_0,\Psi\times 1)$.

\begin{lemma}
\label{lem:linear}
If $E\subset N$ is a normalized isoperimetric region and $\volpsi(M\setminus E^*)>0$ then there exists a constant $c>0$ independent of $\volume(E)$ such that
\begin{equation}
\label{eq:linearisop}
P(E)\ge c\volume(E).
\end{equation}
\end{lemma}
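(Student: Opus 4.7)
The strategy is to slice $E$ by the horizontal hyperplanes $M\times\{t\}$ and to reduce the lemma to a linear isoperimetric inequality on the slices. For $t\in\rr$ set $E_t:=\{p\in M:(p,t)\in E\}$. Since $E$ is normalized, the vertical fiber over each $p$ is a centered segment, so $E_t\subset E_s$ whenever $|s|\le|t|$; in particular $E_t\subset E_0=E^*$ for every $t$, which yields
$$\volpsi(M\setminus E_t)\;\ge\;\volpsi(M\setminus E^*)=:\alpha>0.$$
By Fubini and the weighted BV slicing theorem applied to $\chi_E$ on the product $M\times\rr$ (noting that the density $e^{\Psi(p)}$ is independent of $t$),
$$\volume(E)=\int_\rr \volpsi(E_t)\,dt,\qquad P(E)\;\ge\;\int_\rr \perf{E_t}\,dt.$$
Hence it is enough to establish a pointwise bound $\perf{E_t}\ge c\,\volpsi(E_t)$ for a.e.\ $t\in\rr$, with $c>0$ depending only on $M$ and $\alpha$; integration in $t$ then gives the claim.

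To obtain the pointwise estimate I would split according to the size of $E_t$ and invoke Lemma~\ref{DIVG}. If $\volpsi(E_t)\le\volpsi(M)/2$, item (i) of that lemma directly gives $\perf{E_t}\ge c_1\,\volpsi(E_t)$. If instead $\volpsi(E_t)>\volpsi(M)/2$, then $\volpsi(M\setminus E_t)\le\volpsi(M)/2$, and applying (i) to $M\setminus E_t$ together with $\perf{E_t}=\perf{M\setminus E_t}$ and the lower bound $\volpsi(M\setminus E_t)\ge\alpha$ yields
$$\perf{E_t}\;\ge\; c_1\,\volpsi(M\setminus E_t)\;\ge\; c_1\alpha\;\ge\;\frac{c_1\alpha}{\volpsi(M)}\,\volpsi(E_t).$$
Setting $c:=c_1\min\{1,\,\alpha/\volpsi(M)\}$ handles both regimes, and integration over $t$ concludes the proof.

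The only delicate point is the slicing inequality for $P$; this is a standard consequence of the BV slicing theorem on a product space applied to $\chi_E$, and the fact that the density factors as $(\Psi\times 1)(p,t)=\Psi(p)$ — in particular, constant along the $\rr$-factor — makes the weighted version transfer verbatim from the classical statement. Note that the isoperimetric hypothesis on $E$ is not actually exploited in the argument: only the normalization of $E$ and the strict positivity of $\volpsi(M\setminus E^*)$ are used. The resulting constant $c$ depends on $M$ and on $\alpha=\volpsi(M\setminus E^*)$, but it is manifestly independent of $\volume(E)$, as required.
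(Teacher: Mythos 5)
Your slicing framework and the slice-by-slice estimates are correct as far as they go, and the inequality $P(E)\ge\int_\rr \perf{E_t^*}\,dt$ is the same tool the paper uses. The genuine gap is in the constant: you end up with $c=c_1\min\{1,\alpha/\volpsi(M)\}$ where $\alpha=\volpsi(M\setminus E^*)$, and $\alpha$ is not under control — it depends on $E$ and may be arbitrarily small. The lemma is applied in Theorem~\ref{thm:cylinder} to isoperimetric regions of arbitrarily large volume, and the contradiction with $P(E)\le 2\volpsi(M)$ requires one constant depending only on $M$, uniform over all such $E$; with your $c(\alpha)$ the product $c(\alpha)\volume(E)$ need not blow up as $\volume(E)\to\infty$ if $\alpha$ shrinks along the sequence. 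Your closing remark that the isoperimetric hypothesis is never used is the tell-tale sign that something is lost: for general normalized sets the lemma with a uniform constant is \emph{false}. Take the normalized set whose slice at height $t$ is $M\setminus B(p_0,\eps)$ for $|t|\le T$ and empty otherwise; then $\volume(E)\approx 2T\volpsi(M)$ while $P(E)\approx 2\volpsi(M)+2T\areaf{\partial B(p_0,\eps)}$, so $P(E)/\volume(E)\to 0$ as $\eps\to 0$ and $T\to\infty$, even though $\alpha>0$ throughout. So no argument that ignores the minimality of $E$ can produce the required uniform constant.

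The missing idea is exactly where the paper uses minimality. On the set of heights where $\volpsi(E_t^*)>\volpsi(M)/2$ — an interval $(-\tau,\tau)$ by normalization — one cannot hope for a linear slice inequality with a uniform constant, but one can bound $\tau$ itself. Comparison with slabs gives $P(E)\le 2\volpsi(M)$, which forces $\volpsi(M\setminus E_t^*)\ge P(E,M\times(0,t))$ for $t<\tau$; combined with Lemma~\ref{DIVG}(2) and the coarea formula this yields
\begin{equation*}
y(t)\ge c_2\int_0^t y(s)^{(n-1)/n}\,ds,\qquad y(t)=\volpsi(M\setminus E_t^*),
\end{equation*}
hence $y(t)\ge (c_2t/n)^n$ and $\tau\le n\volpsi(M)^{1/n}/c_2$, a bound depending only on $M$. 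The volume of $E$ over $(-\tau,\tau)$ is then at most $2\tau\volpsi(E_0^*)\le \tau P(E)$ (using $2\volpsi(E_0^*)\le P(E)$, which follows from normalization and regularity), and adding this to your estimate on $\{|t|\ge\tau\}$ — where Lemma~\ref{DIVG}(1) applies directly to the slices — gives $P(E)\ge c\volume(E)$ with $c$ depending only on $M$. You should incorporate this control of $\tau$ (or an equivalent use of $P(E)\le 2\volpsi(M)$) to repair the argument.
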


\begin{proof}
For every $t\in\rr$ we define $M_t=M\times\{t\}$ and $E_t=E\cap M_t$. As $E$ is normalized we can choose $\tau\ge 0$ so that $\volpsi(E_t^*)\le \volpsi(M)/2$ for all $t\ge \tau$ and $\volpsi(E_t^*)> \volpsi(M)/2$ for all $t\in [0,\tau)$ if $\tau>0$.

Let us consider first the case $\tau>0$.

We apply the coarea formula and Lemma \ref{DIVG} (1) to obtain
\begin{equation}
\label{eq:eq1}
\begin{split}
P(E)\ge P(E,M\times [\tau,\infty))&\ge \int_\tau^\infty \perf{E_s^*}ds
\\
&\ge c_1\int_\tau^\infty\volpsi(E_s^*)ds
\\
&=c_1\volume(E\cap (M\times [\tau,\infty))).
\end{split}
\end{equation}

On the other hand, for $t\in [0,\tau)$ we have
\begin{equation}
\label{eq:4-2}
\volpsi(M\setminus E_t^*)\ge P(E,M\times (0,t)),
\end{equation}
since otherwise
\begin{align*}
\volpsi(M)&=\volpsi(M\setminus E_t^*)+\volpsi(E_t^*)
\\
&<P(E,M\times (0,t))+P(E,M\times (t,\infty))
\\
&\le P(E)/2.
\end{align*}
This is a contradiction since comparison of $E$ with a slab $M\times [a,b]$ of the same volume implies that $P(E)\le P(M\times [a,b])=2\volpsi(M)$. This proves \eqref{eq:4-2}. Calling $y(t)=\volpsi(M\setminus E_t^*)$, using the coarea formula and Lemma~\ref{DIVG}$(2)$, we may rewrite the inequality \eqref{eq:4-2} as
\[
y(t)\ge c_2\int_0^t y(s)^{(n-1)/n}ds.
\]
As $y(t)>0$ for all $t\in [0,\tau)$ we have 
\[
y(t)\ge\big(\tfrac{c_2}{n}\big)^nt^n.
\]
%
In particular, taking limits when $t\to\tau^-$ and using that $y(t)$ is non-decreasing
\[
\volpsi(M)\ge \volpsi(M\setminus E_\tau^*)=y(\tau)\ge \big(\tfrac{c_2}{n}\big)^n\tau^n.
\]
Hence
\[
\tau\le\frac{n\volpsi(M)^{1/n}}{c_2}
\]
and so
\begin{equation}
\label{eq:eq2}
\begin{split}
\volpsi(E\cap (M\times (0,\tau))&=\int_0^\tau\volpsi(E_s^*)ds\le\volpsi(E_0^*)\tau
\\
&\le\volpsi(E_0^*)\frac{n\volpsi(M)^{1/n}}{c_2}
\\
&\le \frac{n\volpsi(M)^{1/n}}{c_2}\frac{P(E)}{2}.
\end{split}
\end{equation}
The last inequality follows $2\volpsi(E_0^*)\le P(E)$, which holds since $E$ is normalized and so $P(E)$ is the sum of a lateral area that projects to some set of weighted measure zero on $M$ and the area of the graphs of two  $C^1$ functions $u$ and $-u$ over some set $\Om\subset M$ of full measure in $E^*$. So we have
\[
\volpsi(E^*)=\volpsi(\Om)=\int_\Om e^\Psi dM\le \int_\Om e^\Psi\sqrt{1+|\nabla u|^2}dM\le \frac{P(E)}{2}.
\]
Hence \eqref{eq:linearisop} follows from \eqref{eq:eq1} and \eqref{eq:eq2}.

It remains to consider the case $\tau=0$. In this case, equation \eqref{eq:eq1} alone implies the linear isoperimetric inequality \eqref{eq:linearisop} since $\volf{E\cap(M\times[0,+\infty))}=\frac{1}{2}\volf{E}$ as $E$ is normalized.
\end{proof}

\begin{theorem}
\label{thm:cylinder}
Let $(M,g,\Psi)$ be a compact manifold with density. For large volumes, isoperimetric regions in the cylinder $(M\times\rr,g\times g_0,\Psi\times 1)$
are slabs of the form $M\times [a,b]$, where $[a,b]\subset\rr$ is a bounded interval. 
\end{theorem}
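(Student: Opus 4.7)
My plan is to combine existence (Theorem~\ref{thm:existence}), Steiner symmetrization in the $\rr$-factor, a direct comparison with a slab, and the linear isoperimetric inequality of Lemma~\ref{lem:linear}. Fix a weighted volume $v>0$. Because $\{t_s:s\in\rr\}\subset\Isom(M\times\rr,g\times g_0,\Psi\times 1)$ acts cocompactly on $M\times\rr$ (the quotient being the compact base $M$), Theorem~\ref{thm:existence} produces an isoperimetric region $E$ with $\volume(E)=v$. Since the density $\Psi\times 1$ is invariant under $\rr$-translations, Steiner symmetrization in the fibres $\{p\}\times\rr$ preserves weighted volume and does not increase weighted perimeter, so I may (and do) assume that $E$ is normalized.

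I then compare with the slab $S_v:=M\times\bigl[-\tfrac{v}{2\volpsi(M)},\,\tfrac{v}{2\volpsi(M)}\bigr]$, which satisfies $\volume(S_v)=v$ and $P(S_v)=2\volpsi(M)$; isoperimetricity gives $P(E)\le 2\volpsi(M)$. Suppose for contradiction that $\volpsi(M\setminus E^*)>0$. Then Lemma~\ref{lem:linear} produces a constant $c>0$ that depends only on $(M,g,\Psi)$ and, in particular, is independent of $v$, with $P(E)\ge c\,v$. Combining the two estimates yields the a priori bound $v\le 2\volpsi(M)/c$, which is violated for all sufficiently large $v$. Hence above this threshold one must have $E^*=M$ up to a set of weighted measure zero.

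Since $E$ is normalized with $E^*=M$ almost everywhere, I can write $E=\{(p,t):|t|\le u(p)\}$ for a nonnegative (BV) function $u$ on $M$. The weighted area formula for subgraphs, used already at the end of the proof of Lemma~\ref{lem:linear}, gives
\[
P(E)\,\ge\,2\int_M\sqrt{1+|\nabla u|^2}\,e^\Psi\,dM\,\ge\,2\int_M e^\Psi\,dM\,=\,2\volpsi(M),
\]
with equality on the right if and only if $u$ is constant (any singular part of $Du$ would only strengthen the left inequality). Matched with the upper bound $P(E)\le 2\volpsi(M)$, all three inequalities become equalities, forcing $u\equiv b$ for some $b>0$; thus the symmetrized $E$ is the slab $M\times[-b,b]$. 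The equality case of the Steiner symmetrization (each fibre of the pre-symmetrization $E$ being a single interval of length $2b$, centred at a vertical position $a(p)$ for which the tilt factor $\sqrt{1+|\nabla a|^2}\equiv 1$) then forces $a$ to be constant, so the original isoperimetric set is a vertical translate of this slab and has the required form $M\times[a,b]$.

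The main technical obstacle is a rigorous treatment of the equality case in the displayed inequality, and of the companion rigidity for Steiner symmetrization, when $\partial E$ is only known to be of class BV. This is settled either by invoking the regularity of weighted isoperimetric boundaries due to Morgan--Pratelli \cite{MR3038539} (so $\partial E$ is a smooth graph away from a negligible singular set and the classical area formula applies), or by a direct BV argument in which the singular part of $Du$ is non-negative and the rigidity of Steiner symmetrization with connected symmetric fibres is used.
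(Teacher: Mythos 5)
Your proposal is correct and follows essentially the same route as the paper: existence via Theorem~\ref{thm:existence}, Steiner symmetrization to normalize, the slab comparison $P(E)\le 2\volpsi(M)$, Lemma~\ref{lem:linear} to force $E^*=M$ for large volume, and the graph-area lower bound whose equality case forces the bounding functions to be constant. Your extra care with the BV equality case and the rigidity of Steiner symmetrization is a reasonable refinement of the paper's brief appeal to regularity and to the equality statement $P(\symm(E))=P(E)\Leftrightarrow E=\symm(E)$, but it does not change the argument.
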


\begin{proof}
Existence of isoperimetric regions in $N=M\times\rr$ is guaranteed by Theorem~\ref{thm:existence}. If $E$ is an isoperimetric region in $N$, comparison with slabs implies
\begin{equation}
\label{eq:compslabs}
P(E)\le 2\volpsi(M),
\end{equation}
for all volumes $v>0$. 

We take an isoperimetric set $E\subset M$. Let $\symm(E)$ be its Steiner symmetrization with respect to $M\times\{0\}$, see \cite[\S~14.1]{MR2976521}. As
\[
\volume(E)=\int_M\bigg\{\int_{E_p} e^{\Psi\times 1}dt\bigg\}dM=\int_Me^{\Psi(p)} |E_p|dM(p)=\volume(\symm(E)),
\]
where $|E_p|$ is the $1$-dimensional Lebesgue measure of $E_p$, the volume is preserved when we pass to the Steiner symmetrization of $E$. To see that
\begin{equation}
\label{eq:persym}
P(\symm(E))\le P(E)
\end{equation}
we consider a function $u:\Omega\subset M\to\rr$ and the graph $G(u)$ of $u$ and we observe that the weighted area of $G(u)$ is given by
\[
\text{area}_\Psi(G(u))=\int_\Omega e^\Psi\sqrt{1+|\nabla u|^2}dM.
\]
So we can reason as in the proof of the Euclidean case to verify \eqref{eq:persym}, see again \cite[\S~14.1]{MR2976521}. Equality holds if and only if $E=\symm(E)$. If $E$ is an isoperimetric region then also $\symm(E)$ is isoperimetric and, moreover, $\symm(E)^*=E^*$. So from now on we assume that $E$ is normalized replacing $E$ by $\symm(E)$ if necessary.

If $\volpsi(M\setminus E^*)>0$ then Lemma~\ref{lem:linear} provides a constant $c>0$ independent of $\volume(E)$ so that $P(E)\ge c\volume(E)$. But this in contradiction to \eqref{eq:compslabs}, since by hypothesis we are working with large volumes. Hence $\volpsi(M\setminus E^*)=0$ and $E^*=M$ and $E$ is the region between the graphs of two functions $u,v:M\to\rr$. By regularity of isoperimetric regions, $\nabla u$, $\nabla v$ are defined a.e. on $M$ and
\begin{align*}
P(E)&=\int_M e^\Psi \sqrt{1+|\nabla u|^2}dM+\int_M e^\Psi \sqrt{1+|\nabla v|^2}dM
\\ 
&\ge 2\int_Me^\Psi dM=2\volpsi(M).
\end{align*}
Since $P(E)\le 2\volpsi(M)$ we should have equality in the above inequality, that implies $\nabla u=\nabla v=0$ and so $E$ is a slab. This completes the proof of the Theorem.
\end{proof}

\begin{remark}
Note that Theorem \ref{thm:cylinder} does not hold when there is a non-trivial density in the vertical factor. For example, in $(S^n \times \rr,g\times g_0, 1\times e^{-t^2/2})$ the only isoperimetric regions are of the type $S^n \times(-\infty,a)$ or $S^n \times (a,\infty)$. See Example 4.6 in \cite{ROSALES2021}.
\end{remark}

%


\end{document}